\documentclass[1pt]{article}
\usepackage{amsmath, amssymb, amsthm, amsfonts, cases}
\usepackage{mathrsfs}
\usepackage{url}
\usepackage{authblk}
\usepackage[usenames]{color}
\usepackage{geometry}

\newtheorem{rmk}{Remark}[section]
\newtheorem{lemma}{Lemma}[section]
\newtheorem{theorem}{Theorem}[section]

\newtheorem{definition}{Definition}[section]
\newtheorem{pro}{Problem}[section]
\newtheorem{ass}{Assumption}[section]

\newcommand{\eps}{\varepsilon}

\newcommand{\R}{\mathbb{R}}

\geometry{left=1cm,right=1cm,top=1.5cm,bottom=1.5cm}

\allowdisplaybreaks

 \makeatother
\begin{document}

\title{  Partial Information Near-Optimal Control of Forward-Backward Stochastic Differential System with Observation Noise
 \thanks{This work was supported by the Natural Science Foundation of Zhejiang Province
for Distinguished Young Scholar  (No.LR15A010001),  and the National Natural
Science Foundation of China (No.11471079, 11301177) }}

\date{}

   \author{ Qingxin Meng\thanks{Corresponding author.   E-mail: mqx@zjhu.edu.cn}  \hspace{1cm}
   Qiuhong Shi
 \hspace{1cm}  Maoning Tang
\hspace{1cm}
\\\small{Department of Mathematics, Huzhou University, Zhejiang 313000, China}}

\maketitle
\begin{abstract}

This paper first
makes an attempt to investigate  the
partial information
near optimal control of  systems governed by forward-backward stochastic differential equations with
observation noise under the assumption of a convex control domain.
By Ekeland's variational principle and some basic estimates for state processes
and adjoint processes, we establish the necessary conditions for any $\varepsilon $-near
optimal control in a local form with an error order of exact $\varepsilon ^{%
\frac{1}{2}}.$  Moreover, under additional convexity conditions on Hamiltonian
function, we prove that an $\varepsilon $-maximum condition in terms of the
Hamiltonian in the integral form is sufficient for near-optimality.
\end{abstract}

\textbf{Keywords} Near Optimal Control, Forward-Backward
Stochastic Differential Equation, Partial Information,

\maketitle

\section{ Introduction}

In recent years, near-optimization has become an important research topic in optimal control theory.
Compared with its exact-optimality counterpart, near-optimality has many appealing properties, which are
useful in both theory and applications. For example, near-optimal controls always exist while optimal controls
may not exist in many situations; there are many candidates for near-optimal controls which can be selected
easily and appropriately for analysis and implementation; in most practical situations, a near-optimal
control suffices to guide decision making while it is usually unrealistic and unnecessary to explore
optimal controls, which are very sensitive to external perturbation. Interested readers may refer to
\cite{zhou1998stochastic} for more discussion about the merits of near-optimality.

Indeed, there has been a large pile of literature on near-optimal controls in both deterministic
and stochastic cases. \cite{zhou1995deterministic,zhou1996deterministic}
investigated near-optimal controls for deterministic
dynamical systems. The history of near-optimality under stochastic systems can be dated back to
\cite{elliott1980the}, where necessary conditions were derived for some near-optimal
controls. \cite{zhou1994a} provided a sufficient condition for near-optimal stochastic
controls and applied it to general manufacturing systems. \cite{zhou1998stochastic} derived necessary and
sufficient conditions for all near-optimal controls under forward systems of the diffusion type.
Current research focuses on near-optimal controls under various systems. Please see \cite{liu2005near}
for regime-switching systems, \cite{bahlali2009necessary}, \cite{huang2010near} and \cite{zhang2012stochastic}
for forward-backward systems, \cite{chighoub2011near} and \cite{hafayed2012maximum} for jump-diffusion systems,
\cite{hui2011near} for recursive systems, and references therein.
 Recently, Meng and Shen
  \cite{MeSh} revisits the stochastic near-optimal control problem considered by
Zhou \cite{zhou1998stochastic}, where the stochastic system is given by a controlled stochastic
differential equation with the control variable taking values in a general control space and
entering both the drift and diffusion coefficients an  improve
the error bound of order from ``almost" $\varepsilon^\frac{1}{3}$ in \cite{zhou1998stochastic}
to ``exactly" $\varepsilon^\frac{1}{3}$.

 Meanwhile, there have been growing interests on stochastic
optimal control problems under partial information, partly due to
the applications in mathematical finance. For the partial
information  optimal control problem, the objective is to find an
optimal control for which the controller has less information than
the complete information filtration. In particular, sometimes an
economic model in which there are information gaps among economic
agents can be formulated as a partial information optimal control
problem (see ${\O}$ksendal \cite{okb}, Kohlmann and Xiong \cite{KoX}).
 Baghery and
${\O}$ksendal \cite{BOK} established a maximum principle of forward systems
with jumps  under partial information.
In 2009,  Meng \cite{Meng} studied
 a partial information stochastic optimal control problem of continuous fully
coupled forward-backward stochastic systems driven by a Brownian
motion.
In 2013, Wang, Wu and Xiong \cite{WWX} studied a partial information optimal control problem derived by forward-backward stochastic systems with correlated noises between the system and the observation. Utilizing a direct method, an approximation method, and a Malliavin derivative method, they established three versions of maximum principle (i.e., necessary condition) for optimal control. In 2017, Meng, Shi and Tang \cite{MST}  revisits the partial information optimal control problem  considered by Wang, Wu and Xiong \cite{WWX} where they improve
the $L^p-$ bounds on the control  from $L^8-$ bounds
to the following  $L^4-$ bounds.

 The purpose of the present paper
is to make a first attempt to discuss the
partial information near optimal control for forward backward stochastic
differential systems with observation noise. Its  main contribution
is the developments of  maximum principle and verification theorem  of  the
partial information
near optimal control in a uniform manner by Ekeland's variational
principle.

The paper is organized as follows. In section 2, we formulate the problem and give
various assumptions used throughout the paper. Section 3 is devoted to derive necessary as well as sufficient near optimality
conditions in the form of stochastic maximum principles in a unified
way.

\section{Formulation of Problem}

In this section, we introduce
some basic notations  which will be
used in this paper.
Let ${\cal T} : = [0, T]$ denote a finite time index, where $0<T <
\infty$. We consider a complete probability space $( \Omega,
{\mathscr F}, {\mathbb P} )$  equipped with two one-dimensional
standard Brownian motions $\{W(t), t \in {\cal T}\}$ and $\{Y(t),t \in {\cal T}\},$
respectively. Let $%
\{\mathscr{F}^W_t\}_{t\in {\cal T}}$ and $%
\{\mathscr{F}^Y_t\}_{t\in {\cal T}}$ be $\mathbb P$-completed natural
filtration generated by $\{W(t), t\in {\cal T}\}$ and $\{Y(t), t\in {\cal T}\},$ respectively. Set $\{\mathscr{F}_t\}_{t\in {\cal T}}:=\{\mathscr{F}^W_t\}_{t\in {\cal T}}\bigvee
\{\mathscr{F}^Y_t\}_{t\in {\cal T}}, \mathscr F=\mathscr F_T.$   Denote by $\mathbb E[\cdot]$ the expectation
under the probablity $\mathbb P.$
 Let $E$ be a Euclidean space. The inner product in $E$ is denoted by
$\langle\cdot, \cdot\rangle,$ and the norm in $ E$ is denoted by $|\cdot|.$
Let $A^{\top }$ denote the
transpose of the matrix or vector $A.$
For a
function $\psi:\mathbb R^n\longrightarrow \mathbb R,$ denote by
$\psi_x$ its gradient. If $\psi: \mathbb R^n\longrightarrow \mathbb R^k$ (with
$k\geq 2),$ then $\psi_x=(\frac{\partial \phi_i}{\partial x_j})$ is
the corresponding $k\times n$-Jacobian matrix. By $\mathscr{P}$ we
denote the
predictable $\sigma$ field on $\Omega\times [0, T]$ and by $\mathscr %
B(\Lambda)$ the Borel $\sigma$-algebra of any topological space
$\Lambda.$ In the follows, $K$ represents a generic constant, which
can be different from line to line.
Next we introduce some spaces of random variable and stochastic
 processes.
For any $\alpha, \beta\in [1,\infty),$
denote by
$M_{\mathscr{F}}^\beta(0,T;E)$ the space of all $E$-valued and ${%
\mathscr{F}}_t$-adapted processes $f=\{f(t,\omega),\ (t,\omega)\in \cal T
\times\Omega\}$ satisfying
$
\|f\|_{M_{\mathscr{F}}^\beta(0,T;E)}\triangleq{\left (\mathbb E\bigg[\displaystyle%
\int_0^T|f(t)|^ \beta dt\bigg]\right)^{\frac{1}{\beta}}}<\infty, $ by $S_{\mathscr{F}}^\beta (0,T;E)$ the space of all $E$-valued and ${%
\mathscr{F}}_t$-adapted c\`{a}dl\`{a}g processes $f=\{f(t,\omega),\
(t,\omega)\in {\cal T}\times\Omega\}$ satisfying $
\|f\|_{S_{\mathscr{F}}^\beta(0,T;E)}\triangleq{\left (\mathbb E\bigg[\displaystyle\sup_{t\in {\cal T}}|f(t)|^\beta \bigg]\right)^{\frac
{1}{\beta}}}<+\infty,$
by $L^\beta (\Omega,{\mathscr{F}},P;E)$ the space of all
$E$-valued random variables $\xi$ on $(\Omega,{\mathscr{F}},P)$
satisfying
$ \|\xi\|_{L^\beta(\Omega,{\mathscr{F}},P;E)}
\triangleq
\sqrt{\mathbb E|\xi|^\beta}<\infty,$
by $M_{\mathscr{F}}^\beta(0,T;L^\alpha (0,T; E))$ the space of all $L^\alpha (0,T; E)$-valued and ${%
\mathscr{F}}_t$-adapted processes $f=\{f(t,\omega),\ (t,\omega)\in[0,T]%
\times\Omega\}$ satisfying $
\|f\|_{\alpha,\beta}\triangleq{\left\{\mathbb E\bigg[\left(\displaystyle
\int_0^T|f(t)|^\alpha
dt\right)^{
\frac{\beta}{\alpha}}\bigg]\right\}^
{\frac{1}{\beta}}}<\infty. $
 Finally, we define the space
\begin{equation*}
{\ \mathbb{M}}^\beta[0,T]
:=S_{\mathscr{F}}^\beta(0,T;\mathbb R^n)\times S_{\mathscr{F}
}^\beta(0,T;\mathbb R^m) \times M_{\mathscr{F}}^\beta(0,T;L^\alpha (0,T; \mathbb R^m))
\times M_{\mathscr{F}}^\beta(0,T;L^\alpha (0,T; \mathbb R^m)).
\end{equation*}
Then $\mathbb{M}^\beta[0,T]$ is a Banach space with respect to the norm $||\cdot||_{%
\mathbb{M}^2}$ given by
\begin{equation*}
||\Theta (\cdot)||_{\mathbb{M}^\beta}^\beta=
\mathbb E\bigg[\sup_{0\leq t\leq T}\vert
x(t)|^\beta\bigg]+
\mathbb E\bigg[\sup_{0\leq t\leq T}\vert y(t)|^\beta\bigg]+\mathbb E\bigg[\bigg\{\int_0^T\vert z_1(t)|^2dt\bigg\}^{\frac{\beta}{2}}\bigg]+\mathbb E\bigg[\bigg\{\int_0^T\vert z_2(t)|^2dt\bigg\}^{\frac{\beta}{2}}\bigg]
\end{equation*}
for $\Theta (\cdot)=(x(\cdot),y(\cdot), z_1(\cdot), z_2(\cdot))\in \mathbb{M}^\beta[0,T].$

Consider the following
forward-backward stochastic differential equation

\begin{eqnarray} \label{eq:1}
\left\{
\begin{aligned}
dx(t)=&b(t,x(t),u(t))dt+ \sigma_1(t, x(t), u(t)) dW(t)+\sigma _2(t, x(t), u(t)) dW^u(t),
\\
dy(t)=&f(t,x(t),y(t),z_1(t),z_2(t),u(t))dt+ z_1(t) dW(t)+  z_2(t) dW^u(t),\\
x(0)=&x,\\
y(T)=&\phi(x(T))
\end{aligned}
\right.
\end{eqnarray}
with one observation  processes $Y(\cdot)$ driven by the following stochastic differential equation

\begin{eqnarray}\label{eq:2}
\left\{
\begin{aligned}
dY(t)=&h(t,x(t),u(t))dt+ dW^u(t),\\
Y(t)=& 0,
\end{aligned}
\right.
\end{eqnarray}
where $b: {\cal T} \times \Omega \times {\mathbb R}^n
 \times U  \rightarrow {\mathbb R}^n$,
  $\sigma_1:
{\cal T} \times \Omega \times {\mathbb R}^n
 \times U \rightarrow {\mathbb R}^n$, $\sigma_2: {\cal T} \times \Omega \times {\mathbb R}^n
 \times U \rightarrow {\mathbb R}^n $,
 $f: {\cal T} \times \Omega \times {\mathbb R}^n \times {\mathbb R}^m\times {\mathbb R}^m \times {\mathbb R}^m
 \times U \rightarrow {\mathbb R}^m,
 \phi: \Omega \times {\mathbb R}^n  \rightarrow {\mathbb R^m}$
 and $h: {\cal T} \times \Omega \times {\mathbb R}^n
 \times U \rightarrow {\mathbb R}$  are given random mapping with
 $U$ being a nonempty convex  compact  subset of $\mathbb R^k.$ In the above equations, $u(\cdot)$ is our admissible control
 process defined as follows.

\begin{definition}
   An admissible control process is defined as an ${\mathscr F}^Y_t$-adapted process valued in an nonempty
  convex compact subset $U$  in  $\mathbb R^K.$
  The set of all admissible controls is denoted by $\cal A.$
\end{definition}

Now we make the following standard  assumptions
 on the coefficients of the equations
 \eqref{eq:1} and
 \eqref{eq:2}.

\begin{ass}\label{ass:1.1}
(i)The coefficients $b$, $\sigma_1,\sigma_2 $ and $h$  are ${\mathscr P} \otimes {\mathscr
B} ({\mathbb R}^n)  \otimes {\mathscr B}
(U)$-measurable, $f$ is  ${\mathscr P} \otimes {\mathscr
B} ({\mathbb R}^n)\otimes {\mathscr
B} ({\mathbb R}^m)\otimes {\mathscr
B} ({\mathbb R}^m) \otimes {\mathscr
B} ({\mathbb R}^m)  \otimes {\mathscr B}
(U) $-measurable, and $\phi$ is ${\mathscr F}_T \otimes {\mathscr B} ({\mathbb
R}^n) $-measurable. For each $(x,
u) \in \mathbb {R}^n\times U$, $b (\cdot, x,u), \sigma_1(\cdot,x,u)$,
 $\sigma_2 (\cdot, x, u)$
and $h(\cdot, x,u)$ are all $\{\mathscr{F}_t\}_{t\in \cal T}$-adapted processes. For almost all $(t, \omega)\in {\cal T}
\times \Omega$, the mappings
\begin{eqnarray*}
(x,u) \rightarrow \psi(t,\omega,x, u)
\end{eqnarray*}
\begin{eqnarray*}
(x,y,z_1, z_2,u) \rightarrow l(t,\omega,x, y, z_1,z_2, u),
\end{eqnarray*}
\begin{eqnarray*}
 x\rightarrow \phi(\omega,x),
\end{eqnarray*}
are twice differentiable with all the partial derivatives of
$\psi,f$ and $\phi$ with respect to $(x, y, z_1, z_2, u)$ up to order $2$ being
continuous in $(x, y, z_1, z_2, u)$ and being uniformly
bounded, where $\psi=b, \sigma_1,  \sigma_2$ and $h.$
Moreover, $\sigma_2$ and $h$ are
uniformly bounded.
\end{ass}

Now we
begin to discuss  the well- posedness
of \eqref{eq:1} and \eqref{eq:2}.
Indeed, putting \eqref{eq:2} into the state equation \eqref{eq:1}, we get that
\begin{eqnarray} \label{eq:4}
\left\{
\begin{aligned}
dx(t)=&(b-\sigma_2h)(t,x(t),u(t))dt+ \sigma_1(t, x(t), u(t)) dW(t)+\sigma _2(t, x(t), u(t)) dY(t),
\\
dy(t)=&(f(t,x(t),y(t),z_1(t),z_2(t),u(t))
-z_2(t)h(t,x(t),u(t)))dt+ z_1(t) dW(t)+  z_2(t) dY(t),\\
x(0)=&x,\\
y(T)=&\Phi(x(T)).
\end{aligned}
\right.
\end{eqnarray}
Under Assumption \ref{ass:1.1}, for any
admissible control $u(\cdot)\in \cal A,$
the  equation \eqref{eq:4} admits a unique strong
  solution $( x(\cdot),  y(\cdot), z_1(\cdot), z_2(\cdot))\in \mathbb{M}^\beta[0,T], \forall \beta\geq 2.$

For  the strong solution
 $( x^u(\cdot),  y^u(\cdot), z_1^u(\cdot), z_2^u(\cdot))$ of  the equation
 \eqref{eq:4} associated with
 any given admissible control
$u(\cdot)\in \cal A,$ we  introduce a process
\begin{eqnarray}\label{eq:7}
\begin{split}
  \rho^u(t)
  =\displaystyle
  \exp^{\bigg\{\displaystyle\int_0^t h(s, x^u(s), u(s))dY(s)
  -\int_0^t\frac{1}{2} h^2(s, x^u(s), u(s))ds\bigg\}},
  \end{split}
\end{eqnarray}
which is abviously the solution to the following  SDE
\begin{eqnarray} \label{eq:8}
  \left\{
\begin{aligned}
  d \rho^u(t)=&  \rho^u(t) h(s, x^u(s), u(s))dY(s)\\
  \rho^u(0)=&1.
\end{aligned}
\right.
\end{eqnarray}
Under Assumption \ref{ass:1.1},
$\rho^u(\cdot)$ is
an  $( \Omega,
{\mathscr F}, \{\mathscr{F}_t\}_{t\in {\cal T}}, {\mathbb P} )-$
martingale. Define a new probability measure $\mathbb P^u$ on $(\Omega, \mathscr F)$ by
\begin{eqnarray}
  d\mathbb P^u=\rho^u(T)d\mathbb P.
\end{eqnarray}
 Then from Girsanov's theorem and \eqref{eq:2}, $(W(\cdot),W^u(\cdot))$ is an
$\mathbb R^2$-valued standard Brownian motion defined in the new probability
space $(\Omega, \mathscr F, \{\mathscr{F}_t\}_{0\leq t\leq T},\mathbb P^u).$
So $(\mathbb P^u, x^u(\cdot), y^u(\cdot),
z^u_1(\cdot), z^u_2(\cdot), \rho^u(\cdot),
 W(\cdot), W^u(\cdot))$ is a weak
solution on $(\Omega, \mathscr F, \{\mathscr{F}_t\}_{t\in \cal
T})$ of  \eqref{eq:1} and
\eqref{eq:2}.

 The cost functional is given by
\begin{eqnarray}\label{eq:13}
  \begin{split}
    J(u(\cdot)=\mathbb E^u\bigg[\int_0^Tl(t,x(t),y(t),z_1(t),z_2(t), u(t))dt+ \Phi(x(T))+\gamma(y(0))\bigg].
  \end{split}
\end{eqnarray}
 where $\mathbb E^u$ denotes the expectation with respect to the
probability space $(\Omega, \mathscr F, \{\mathscr{F}_t\}_{0\leq
t\leq T},\mathbb P^u)$ and  $l:
{\cal T} \times \Omega \times {\mathbb R}^n \times {\mathbb R}^m
\times {\mathbb R}^m\times {\mathbb R}^m
 \times U \rightarrow {\mathbb R},$ $\Phi: \Omega \times {\mathbb R}^n  \rightarrow {\mathbb R}$
 and  $\gamma: \Omega \times {\mathbb R}^m \rightarrow {\mathbb R}$
  are given random mappings
satisfying  the following assumption:

 \begin{ass}\label{ass:1.2}
 $l$ is ${\mathscr P} \otimes {\mathscr
B} ({\mathbb R}^n) \otimes {\mathscr B} ({\mathbb R}^m)\otimes {\mathscr B} ({\mathbb R}^m) \otimes {\mathscr B} ({\mathbb R}^m)  \otimes {\mathscr B}
(U) $-measurable,  $\Phi$ is ${\mathscr F}_T \otimes {\mathscr B} ({\mathbb
R}^n) $-measurable, and $\gamma$ is ${
\mathscr F}_0 \otimes {\mathscr B} ({\mathbb
R}^n) $-measurable. For each $(x,y, z_1,z_2,
u) \in \mathbb {R}^n\times \mathbb R^m
\times \mathbb R^m\times \mathbb R^m\times U$, $f (\cdot, x, y,z_1,z_2, u)$ is  an ${\mathbb F}$-adapted process,  $\Phi(x)$
is an ${\mathscr F}_{T}$-measurable random variable, and  $\gamma(y)$
is an ${\mathscr F}_{0}$-measurable random variable. For almost all $(t, \omega)\in [0,T]
\times \Omega$, the mappings
\begin{eqnarray*}
(x,y,z_1, z_2,u) \rightarrow l(t,\omega,x, y, z_1,z_2, u),
\end{eqnarray*}
\begin{eqnarray*}
x \rightarrow \Phi(\omega,x)
\end{eqnarray*}
and
\begin{eqnarray*}
y\rightarrow \gamma(\omega,y)
\end{eqnarray*}
twice differentiable with all the partial derivatives of
$l, \Phi$ and $\gamma$ with respect to $(x, y, z_1, z_2, u)$ up to order $2$ being
continuous in $(x, y, z_1, z_2, u)$ and being uniformly
bounded.
\end{ass}
 Under  Assumption \ref{ass:1.1} and
 \ref{ass:1.2},
  it is easy to check that
the cost functional is well-defined.

Then we  can put forward the following partially observed optimal control problem  in its weak formulation,
 i.e., with changing
 the reference probability space $(\Omega, \mathscr F, \{\mathscr{F}_t\}_{0\leq
t\leq T},\mathbb P^u),$ as follows.

\begin{pro}
\label{pro:1.1}
\begin{equation*}  \label{eq:b7}
V(x)=\displaystyle\inf_{u(\cdot)
\in \cal A}J(u(\cdot)),
\end{equation*}
subject to the
 state equation \eqref{eq:1}, the
 observation equation \eqref{eq:2}
 and the cost functional \eqref{eq:13}.
\end{pro}
Here $V (x)$ refers to the value function of Problem \ref{pro:1.1}.
Obviously, according to  Bayes' formula,
 the cost functional \eqref{eq:13} can be rewritten as
\begin{equation}\label{eq:15}
\begin{split}
J(u(\cdot ))=& \mathbb E\displaystyle\bigg[%
\int_{0}^{T}\rho^u(t)l(t,x(t),y(t),z_1(t),z_2(t), u(t))dt +\rho^u(T)\Phi(x(T))
+\gamma(y(0))\bigg].
\end{split}
\end{equation}
  Therefore, we  can translate   Problem \ref{pro:1.1}
  into  the following  equivalent optimal control problem in
  its strong formulation, i.e., without changing the  reference
probability space $(\Omega, \mathscr F, \{\mathscr{F}_t\}_{0\leq t\leq T},\mathbb P),$
where $\rho^u(\cdot)$ will be regarded as
an additional  state process besides the
state process $(x^u(\cdot), y^u(\cdot),
z^u_1(\cdot), z^u_2(\cdot)).$

\begin{pro}
\label{pro:1.2}
\begin{equation*}  \label{eq:b7}
V(x)=\displaystyle\inf_{u(\cdot)
\in \cal A}J(u(\cdot)),
\end{equation*}
 subject to
 the cost functional \eqref{eq:15}
 and  the following
 state equation
\begin{equation}
\displaystyle\left\{
\begin{array}{lll}
dx(t)=&(b-\sigma_2h)(t,x(t),u(t))dt+ \sigma_1(t, x(t), u(t)) dW(t)+\sigma _2(t, x(t), u(t)) dY(t),
\\
dy(t)=&(f(t,x(t),y(t),z_1(t),z_2(t),u(t))
-z_2(t)h(t,x(t),u(t)))dt+ z_1(t) dW(t)+  z_2(t) dY(t),\\
d \rho^u(t)=&  \rho^u(t) h(s, x^u(s), u(s))dY(s),\\
  \rho^u(0)=&1,
\\
x(0)=&x,\\
y(T)=&\Phi(x(T)).
\end{array}%
\right.  \label{eq:3.7}
\end{equation}
\end{pro}
A control process $\bar{u}(\cdot)\in {\cal A}$ is called optimal, if it achieves the infimum of
$J(u(\cdot))$ over $\cal A$ and the corresponding state
process $(\bar x(\cdot),
\bar y(\cdot),
\bar z_1(\cdot), \bar z_2(\cdot),
\bar \rho(\cdot))$ is called the optimal
state process. Correspondingly $(\bar{u}(\cdot);\bar x(\cdot),
\bar y(\cdot),
\bar z_1(\cdot), \bar z_2(\cdot),
\bar \rho(\cdot))$ is called an optimal pair of Problem \ref{pro:1.2}.
\begin{rmk}
 The present formulation
of the partially observed optimal control problem is quite similar
to a completely observed optimal control problem; the only
difference lies in the admissible class $\cal A$ of controls.
\end{rmk}

Since the objective of this paper is to study near-optimal controls rather than exact-optimal ones,
we give the precise definitions of near-optimality as given in.

\begin{definition}
For a given $\varepsilon\geq 0$,  an admissible pair  $(u^\varepsilon(\cdot); x^\varepsilon(\cdot), y^\varepsilon(\cdot) ,z^\varepsilon(\cdot), \rho^\eps(\cdot))$
is called $\varepsilon$-optimal, if
\begin{eqnarray*}
|J(u^\varepsilon(\cdot))-V(x)|\leq \varepsilon.
\end{eqnarray*}
\end{definition}

\begin{definition}
Both a family of admissible control pairs $(X^\varepsilon(\cdot), u^\varepsilon(\cdot))$ parameterized by
$\varepsilon\geq 0$ and any element $(X^\varepsilon(\cdot), u^\varepsilon(\cdot))$, or simply $u^\varepsilon(\cdot)$,
in the family are called near-optimal if
\begin{eqnarray*}
|J(u^\varepsilon(\cdot))-V(x)|\leq r(\varepsilon).
\end{eqnarray*}
holds for sufficient small $\varepsilon$, where $r$ is a function of $\varepsilon$ satisfying $r(\varepsilon)
\rightarrow 0$ as $\varepsilon\rightarrow 0$. The estimate $r(\varepsilon)$
is called an error bound. If $r(\varepsilon)=C\varepsilon^\delta$ for some $\delta>0$ independent
of the constant $C$, then $u^\varepsilon(\cdot)$ is called near-optimal with order $\varepsilon^\delta$.
\end{definition}

Before we conclude this section, let us recall Ekeland's variational principle.

\begin{lemma}[Ekeland's principle, \cite{ekeland1974variational} ]\label{lem:3.1}
Let $(S,d)$ be a complete metric space and $\rho (\cdot ):S \rightarrow {\mathbb R}$
be lower-semicontinuous and bounded from below. For $\varepsilon \geq 0$,
suppose $u^{\varepsilon }\in S$ satisfies
\begin{equation*}
\rho ( u^\varepsilon ) \leq \inf_{u\in S} \rho (u) + \varepsilon .
\end{equation*}
Then for any $\lambda >0$, there exists $u^\lambda \in S$ such that
\begin{eqnarray*}
\rho ( u^\lambda ) \leq \rho ( u^\varepsilon ) , \quad
d ( u^\lambda, u^\varepsilon  ) \leq \lambda ,
\end{eqnarray*}
and
\begin{eqnarray*}
\rho ( u^\lambda ) \leq \rho (u) + \frac{\varepsilon}{\lambda} d (u^\lambda, u) , {\mbox { for all }} u \in S.
\end{eqnarray*}
\end{lemma}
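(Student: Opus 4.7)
The plan is to prove Ekeland's variational principle by the classical iterative construction due to Brøndsted and Ekeland. First I would introduce on $S$ the binary relation $v \preceq u$ defined by $\rho(v) + (\varepsilon/\lambda)\, d(u,v) \le \rho(u)$, and verify that $\preceq$ is a partial order: reflexivity is immediate, antisymmetry follows from adding the two defining inequalities, and transitivity from the triangle inequality applied to $d(u,w) \le d(u,v) + d(v,w)$. With this order in place, the third conclusion of the lemma is equivalent to saying that $u^\lambda$ is $\preceq$-minimal, so the task reduces to producing a $\preceq$-minimal element that is $\preceq$-below $u^\varepsilon$.

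Next I would construct, by induction, a $\preceq$-decreasing sequence $u_0 \succeq u_1 \succeq \cdots$ with $u_0 := u^\varepsilon$ as follows. Given $u_n$, set
\[
S_n := \{v \in S : v \preceq u_n\}, \qquad m_n := \inf_{v \in S_n} \rho(v);
\]
here $S_n$ is nonempty (it contains $u_n$) and closed in $S$ thanks to the lower semicontinuity of $\rho$, while $m_n$ is finite since $\rho$ is bounded below. Then pick $u_{n+1} \in S_n$ with $\rho(u_{n+1}) \le m_n + 2^{-n-1}$. The key estimate comes from $u_{n+1} \preceq u_n$, which gives $(\varepsilon/\lambda)\, d(u_n, u_{n+1}) \le \rho(u_n) - \rho(u_{n+1})$. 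Telescoping yields a summable series for $d(u_n, u_{n+1})$, so $(u_n)$ is Cauchy and converges to some $u^\lambda \in S$.

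It then remains to verify the three conclusions. Passing $v \preceq u_n$ to the limit $v \to u^\lambda$ for fixed $n$ and using lower semicontinuity shows $u^\lambda \preceq u_n$ for every $n$. In particular $u^\lambda \preceq u^\varepsilon$, which at once gives $\rho(u^\lambda) \le \rho(u^\varepsilon)$ and $(\varepsilon/\lambda)\, d(u^\lambda, u^\varepsilon) \le \rho(u^\varepsilon) - \rho(u^\lambda) \le \varepsilon$, hence $d(u^\lambda, u^\varepsilon) \le \lambda$. For the minimality condition, suppose some $w \in S$ satisfies $\rho(w) + (\varepsilon/\lambda)\, d(u^\lambda, w) < \rho(u^\lambda)$; then $w \preceq u^\lambda \preceq u_n$ for every $n$, so $w \in S_n$ and $\rho(w) \ge m_n \ge \rho(u_{n+1}) - 2^{-n-1}$. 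Letting $n \to \infty$ forces $\rho(w) \ge \rho(u^\lambda)$, contradicting the strict inequality above (since $w \ne u^\lambda$ would have to hold by strictness).

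The main obstacle is really bookkeeping: one must be careful that lower semicontinuity is used precisely to pass the defining inequality of $\preceq$ to the limit, and that $\rho(u_n)$ is a decreasing sequence bounded below so that its limit exists and equals $\inf_n \rho(u_n)$, which is what lets the contradiction in the minimality step close. The rest of the argument is essentially symbol-pushing with the triangle inequality.
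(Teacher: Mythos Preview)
Your proof is the classical Br{\o}ndsted--Ekeland iterative argument and is correct. The only point where you could be slightly more explicit is the identification $\lim_n \rho(u_n)=\rho(u^\lambda)$ needed in the minimality step: lower semicontinuity gives $\rho(u^\lambda)\le\lim_n\rho(u_n)$, while $u^\lambda\in S_n$ for every $n$ gives $m_n\le\rho(u^\lambda)$ and hence $\lim_n m_n\le\rho(u^\lambda)$; combining this with $|\rho(u_{n+1})-m_n|\le 2^{-n-1}$ closes the equality. You already flag this bookkeeping issue in your final paragraph, so the argument is complete.

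As for comparison with the paper: the paper does not supply a proof of this lemma at all. Lemma~\ref{lem:3.1} is stated as a classical result with a citation to Ekeland's 1974 paper and is used as a black box in the proof of the main necessary condition (Theorem~3.1). So your write-up goes well beyond what the authors provide; it is the standard self-contained proof one finds in textbooks on variational analysis.
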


\section{Main Results}

In this section, we establish the necessary and sufficient conditions for a control to
be near-optimal. The proof of our main results is based on Ekeland's variational principle
and convex variation techniques as well as some delicate estimates for the state process and the adjoint processes.

To this end,
for the state equation \eqref{eq:3.7},
we first introduce the
corresponding adjoint equation.

Define the Hamiltonian
 ${\cal H}: \Omega \times {\cal T} \times \mathbb R^n \times \mathbb R^m \times \mathbb R^{m}
  \times \mathbb R^{m}\times U\times
\mathbb R^n\times \mathbb R^n
\times \mathbb R^n
  \times \mathbb R^{m}
  \times \mathbb R\rightarrow \mathbb R$  as follows:
\begin{eqnarray}\label{eq6}
&& {\cal H} (t, x, y, z_1, z_2,u,k, p,q_1, q_2, R_2) \nonumber \\
&& = l (t, x, y, z_1,z_2, u)
+ \langle b (t, x, u),  p \rangle
+ \langle \sigma_1(t, x, u), q_1\rangle + \langle \sigma_2 (t, x, u), q_2\rangle  +
\langle f(t,x,y,z_1,z_2,u), k\rangle  +\langle R_2, h(t,x,u)\rangle \ .
\end{eqnarray}

For any given admissible control pair $ = (u(\cdot); x^u(\cdot),
 y^u(\cdot), z^u_1(\cdot),
 z^u_2(\cdot),\rho^u(\cdot)), $ the corresponding adjoint equation is defined as follows.

\begin{numcases}{}\label{eq:18}
\begin{split}
dr^u(t)&=- l(t,\Theta^u(t), u(t))dt
 +R_1^u\left(
t\right)  dW\left(  t\right) +{ R}_{2}^u\left( t\right) dW^{
u}\left( t\right),
\\
dp^u\left(  t\right)
 &=-{\cal {H}}_{x}\left( t,\Theta^u(t), u(t),\Lambda^u(t),
  R^u_2(t)\right)dt
 +q_1^u
\left(  t\right)  dW\left(  t\right)  +{q}_{2}^u\left( t\right) dW^{
u}\left( t\right),
\\
dk^u\left(  t\right)  &=-{\cal {H}}_{y}\left(  t,\Theta^u(t), u(t),
\Lambda^u(t), R^u_2(t) \right)dt-
{\cal {H}}_{z_1}\left(  t,\Theta^u(t), u(t),\Lambda^u(t),R^u_2(t) \right)  dW\left(  t\right)
  \\&\quad\quad-{\cal {H}}_{z_2}\left(  t,\Theta^u(t), u(t),\Lambda^u(t),
  R^u_2(t) \right)dW^{
u}\left( t\right),
\\ p^u(T)&=\Phi_x(x^u(T))-\phi_x^*( x^u(T)) k(T),
\\ r^u(T)&=\Phi(x^u(T)),
\\ k^u(0)&=-\gamma_y(y^u(0)).
\end{split}
\end{numcases}
Here we have used the following short hand notation
\begin{eqnarray}
  \begin{split}
   &\Theta^u(t):=(x^u(t), y^u(t), z_1^u(t),z_2^u(\cdot)),
\\&\Lambda^u(t):=(k^u(t), p^u(t), q^u_1(t), q_2^u(\cdot)),
\\&\Gamma^u(t):=(r^u(t), R^u_1(t), R_2^u(\cdot)).
  \end{split}
\end{eqnarray}

\begin{eqnarray}\label{eq:19}
\begin{split}
  &{\cal H}_a(t,\Theta^u(t),
 u(t), \Lambda^u(t), R^u_2(t))
 \\=&{\cal {H}}_{a}\left(  t,x^u(t),
 y^u(t), z_1^u(t),
 z_2^u(t), u(t), p^u(t), q_1^u(t), q_2^u(t),
  k^u(t), R_2^u(t)-\sigma_2^*(t, x^u(t), u(t))p^u(t)-(z_2^u) ^*(t)
  k(t) \right),
  \end{split}
\end{eqnarray}
where $a=x, y,z_1, z_2,u.$

Note the adjoint equation \eqref{eq:18}
is a forward-backward
stochastic differential equation whose solution
consists of  an 7-tuple process $( k(\cdot),p(\cdot),q_1(\cdot), { q}_2(\cdot), r(\cdot),R_1(\cdot), R_2(\cdot ) ).$
Under Assumptions \ref{ass:1.1} and
\ref{ass:1.2},  by Proposition 2.1 in
  \cite{Mou} and Lemma 2
  in \cite{Xu95} ,
it is easily to see that  the adjoint equation \eqref{eq:18} admits
a unique solution $(k(\cdot), p(\cdot), q_1(\cdot), { q}_2(\cdot),r(\cdot), R_1(\cdot),R_2(\cdot ) ),
$  also called the adjoint process corresponding
the admissible pair $({u}(\cdot); x(\cdot),
y(\cdot),
z_1(\cdot), z_2(\cdot),
\rho(\cdot))$. Particularly, we write $(k^u(\cdot), p^u(\cdot), q_1^u(\cdot), { q}_2^u(\cdot),r^u(\cdot), R_1^u(\cdot),\\R_2^u(\cdot ) )$ for
the adjoint processes associated with any admissible pair $({u}(\cdot); x^u(\cdot),
y^u(\cdot),
z_1^u(\cdot), z_2^u(\cdot),
\rho^u(\cdot))$, whenever we want to
emphasize the dependence of $(k(\cdot),p(\cdot), q_1(\cdot), { q}_2(\cdot),r(\cdot), R_1(\cdot),R_2(\cdot ) ).$

In order to apply  Ekeland's variational principle to obtain  our main result, we must define a
distance $d$ on \ the space of admissible controls s.t $\left(
\mathcal{A }, d\right) $ is  a complete metric space. For any given  $v\left(
\cdot \right) ,u\left( \cdot \right) \in \mathcal{A},$ we define
\begin{equation}  \label{eq:3.4}
d\left( v(\cdot),u(\cdot)\right) =\left[ E\int_{0}^{T}\left\vert v\left( r\right) -u\left(
r\right) \right\vert ^{2}ds\right] ^{\frac{1}{2}}.
\end{equation}

\begin{lemma} \label{lem:3.5}
  Let Assumptions \ref{ass:1.1}
   and \ref{ass:1.2} be satisfied.
    For any admissible pair $( u (\cdot); \Theta^u(\cdot ),\rho^u(\cdot))=(u(\cdot);x^u(t), y^u(t), z_1^u(t),z_2^u(\cdot),\\\rho^u(\cdot))$ and the corresponding adjoint process $(\Lambda^u(\cdot), \Gamma^u(\cdot))=(k^u(\cdot), p^u(\cdot), q_1^u(\cdot), { q}_2^u(\cdot),r^u(\cdot), R_1^u(\cdot), R_2^u(\cdot ) )$, there exists a constant $C > 0$
such that
\begin{eqnarray}\label{eq:3.5}
\mathbb {E} \bigg [ \sup_{0\leq t\leq T} | \Theta^u( t ) |^m \bigg ]+\mathbb {E} \bigg [ \sup_{0\leq t\leq T} | \rho^u( t ) |^m \bigg ] \leq C ,
\end{eqnarray}
\begin{eqnarray}\label{eq:16}
\mathbb {E} \bigg [ \bigg ( \int_0^T|R_2 ( t ) |^2 dt \bigg )^{\frac{m}{2}} \bigg ] \leq C ,
\end{eqnarray}
where $m \geq 2$.
\end{lemma}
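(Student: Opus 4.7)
The plan is to decouple the three bounds and reduce each to a classical moment estimate. For $\Theta^u=(x^u,y^u,z_1^u,z_2^u)$ I would work directly on the reformulated system \eqref{eq:4} under $\mathbb{P}$ (so that $W,Y$ are independent Brownian motions), which avoids any change of measure; for $\rho^u$ I would exploit the uniform boundedness of $h$ in \eqref{eq:8}; and for $R_2^u$ I would observe that the $(r^u,R_1^u,R_2^u)$-component of the adjoint system \eqref{eq:18} decouples from the remaining adjoint variables and can be converted, via $dW^u(t)=dY(t)-h(t,x^u(t),u(t))\,dt$, into a Lipschitz BSDE under $\mathbb{P}$.

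For the primal estimates, the forward SDE in \eqref{eq:4} has drift $b-\sigma_2 h$ and diffusions $\sigma_1,\sigma_2$; Assumption~\ref{ass:1.1} makes these Lipschitz in $x$ with at most linear growth (using uniform boundedness of $\sigma_2,h$ together with the boundedness of the first partials of $b,\sigma_1$), so a Burkholder--Davis--Gundy and Gronwall argument gives $\mathbb{E}[\sup_t |x^u(t)|^m]\le C$. The BSDE in \eqref{eq:4} then has Lipschitz driver $f-z_2 h$ and terminal $\Phi(x^u(T))$ with finite $L^m$-moments (linear growth of $\Phi$ in $x$), and the classical $L^m$-estimate for Lipschitz BSDEs produces
\[
\mathbb{E}\Bigl[\sup_t |y^u(t)|^m\Bigr] + \mathbb{E}\Bigl[\Bigl(\int_0^T (|z_1^u(t)|^2+|z_2^u(t)|^2)\,dt\Bigr)^{m/2}\Bigr]\le C.
\]
For $\rho^u$, applying It\^o to $(\rho^u)^m$ and using $|h|\le K$ yields a linear Gronwall inequality for $\mathbb{E}|\rho^u(t)|^m$, and Doob's maximal inequality, applied to the nonnegative $\mathbb{P}$-martingale $\rho^u$, promotes the pointwise bound to the supremum; this completes \eqref{eq:3.5}.

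For \eqref{eq:16}, I would rewrite the equation for $r^u$ in \eqref{eq:18} under $\mathbb{P}$ as
\[
 dr^u(t) = -\bigl[l(t,\Theta^u(t),u(t)) + R_2^u(t)\,h(t,x^u(t),u(t))\bigr]\,dt + R_1^u(t)\,dW(t) + R_2^u(t)\,dY(t),\quad r^u(T)=\Phi(x^u(T)),
\]
a BSDE driven by the independent $\mathbb{P}$-Brownian motions $(W,Y)$ whose driver is Lipschitz in $(R_1^u,R_2^u)$ with constant controlled by $\|h\|_\infty$, and whose data $\Phi(x^u(T))$ and $\int_0^T|l|\,dt$ have $L^m$-moments controlled by \eqref{eq:3.5} (via the linear growth of $\Phi,l$ guaranteed by Assumption~\ref{ass:1.2}). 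The standard $L^m$-estimate for Lipschitz BSDEs then bounds $\mathbb{E}[(\int_0^T |R_2^u(t)|^2 dt)^{m/2}]$, which contains \eqref{eq:16}. The only non-routine step is the decoupling observation at the start of this paragraph: once one sees that $(r^u,R_1^u,R_2^u)$ admits a self-contained Lipschitz BSDE formulation under $\mathbb{P}$, the remaining work is standard BDG/Gronwall/Doob machinery.
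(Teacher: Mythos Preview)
Your proposal is correct and coincides in substance with the paper's proof, which consists solely of an appeal to Proposition~2.1 of Mou--Yong and Lemma~2 of Hu for the $L^m$-moment estimates on the forward SDE and the decoupled BSDEs. Your decoupling observation for $(r^u,R_1^u,R_2^u)$ and the rewriting under $\mathbb{P}$ via $dW^u=dY-h\,dt$ is precisely the mechanism those cited results encapsulate; you have simply unpacked what the paper leaves to the references.
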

\begin{proof}
  The proof can be directly obtained
  by  combining Proposition 2.1 in
  \cite{Mou} and Lemma 2
  in \cite{Xu95}.
\end{proof}

\begin{lemma} \label{lem:3.2}

Let Assumptions \ref{ass:1.1}
   and \ref{ass:1.2} be satisfied.
    For any admissible pairs $( u (\cdot); \Theta^u(\cdot ),\rho^u(\cdot))=(u(\cdot);x^u(t), y^u(t), z_1^u(t),z_2^u(\cdot),\\\rho^u(\cdot))$ and
    $( v (\cdot); \Theta^v(\cdot ))=(v(\cdot);x^v(t), y^v(t), z_1^v(t),z_2^v(\cdot),\rho^u(\cdot))$
    and the corresponding adjoint processes $(\Lambda^u(\cdot), \Gamma^u(\cdot))=(k^u(\cdot), p^u(\cdot), q_1^u(\cdot),\\ { q}_2^u(\cdot), r^u(\cdot), R_1^u(\cdot), R_2^u(\cdot ) )$
    and
    $(\Lambda^v(\cdot), \Gamma^v(\cdot))=(k^v(\cdot), p^v(\cdot), q_1^v(\cdot), { q}_2^v(\cdot),r^v(\cdot), R_1^v(\cdot), R_2^v(\cdot ) )$,
 there exists a constant $C > 0$ such that
\begin{equation}
\begin{split}
\|\Theta^u (\cdot)-\Theta^v(\cdot)\|
^2_{\mathbb{M}^2}+\|\Lambda^u (\cdot)-\Lambda^v(\cdot)\|
^2_{\mathbb{M}^2}+\mathbb E\bigg[\int_0^T|R^{u}_2 ( t ) - R^{v}_2 (t)
|^2 dt  \bigg ]
\leq Cd\left( u(\cdot
),v\left( \cdot \right) \right) ^{2}.
\end{split}%
\end{equation}
\end{lemma}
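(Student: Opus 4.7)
The plan is to prove the Lipschitz-type stability estimate by first controlling the difference of the forward state processes and then propagating that control through the three blocks ($r$, $k$, and $(p, q_1, q_2)$) of the adjoint system \eqref{eq:18} in an order that respects their coupling. For the forward block \eqref{eq:3.7}, the SDE for $x$ has coefficients $b - \sigma_2 h$, $\sigma_1$, $\sigma_2$ that are uniformly Lipschitz in $(x, u)$ by Assumption \ref{ass:1.1}, so It\^o's formula applied to $|x^u - x^v|^2$ together with BDG and Gronwall yields $\mathbb{E}\sup_{t\in\mathcal{T}} |x^u(t) - x^v(t)|^2 \leq C d(u,v)^2$. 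The BSDE for $(y, z_1, z_2)$ has terminal $\phi(x^u(T))$ Lipschitz in $x^u(T)$ and driver $f - z_2 h$ uniformly Lipschitz in $(x, y, z_1, z_2, u)$, so standard $L^2$-BSDE stability produces $\mathbb{E}\sup_t |y^u - y^v|^2 + \mathbb{E}\int_0^T(|z_1^u - z_1^v|^2 + |z_2^u - z_2^v|^2) dt \leq C d(u,v)^2$, giving $\|\Theta^u - \Theta^v\|_{\mathbb{M}^2}^2 \leq C d(u,v)^2$.

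For the adjoint system \eqref{eq:18}, I would first observe that the BSDE for $r$ has driver $-l(t, \Theta^u, u)$ which contains no adjoint variable, so $L^2$-BSDE stability combined with $|l(\Theta^u, u) - l(\Theta^v, v)| \leq C(|\Theta^u - \Theta^v| + |u-v|)$ from Assumption \ref{ass:1.2} immediately yields $\mathbb{E}\int_0^T |R_2^u - R_2^v|^2 dt \leq C d(u,v)^2$. Next, the forward SDE for $k$ has $\mathcal{H}_y = l_y + f_y^* k$ and analogous $\mathcal{H}_{z_1}, \mathcal{H}_{z_2}$, i.e., coefficients affine in $k$ with uniformly bounded constant and linear parts and no dependence on $p, q_1, q_2, R_2$. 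Applying It\^o to $|k^u - k^v|^2$, using BDG and Gronwall, and exploiting the a priori moment bound $\mathbb{E}\sup_t |k^u|^m \leq C$ for any $m \geq 2$ (valid since the $k$-SDE is linear with bounded coefficients and bounded forcings $l_y, l_{z_1}, l_{z_2}$), delivers $\mathbb{E}\sup_t |k^u - k^v|^2 \leq C d(u,v)^2$. Finally, the BSDE for $(p, q_1, q_2)$ has terminal $\Phi_x(x^u(T)) - \phi_x^*(x^u(T)) k^u(T)$ and driver $-\mathcal{H}_x$ which, after the shift in the $R_2$-slot prescribed by \eqref{eq:19}, is linear in $(p, q_1, q_2)$ with uniformly bounded coefficients and forcings involving $l_x$, $f_x^* k$, $h_x R_2$, $h_x z_2^* k$, $h_x \sigma_2^* p$. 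BSDE $L^2$-stability, together with the bounds on $\Theta^u - \Theta^v$, $k^u - k^v$, and $R_2^u - R_2^v$ already obtained, closes the estimate.

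The main obstacle is handling cross terms of the form $(f_a^*(\Theta^u, u) - f_a^*(\Theta^v, v)) k^u$ (in the $k$- and $p$-equations) and $(h_x(\Theta^u, u) - h_x(\Theta^v, v)) R_2^u$ (in the $p$-equation): although $\Theta^u - \Theta^v$ and $u - v$ are small in $L^2$, they multiply adjoint processes that live only in $L^m$ for finite $m$. One must invoke the higher-moment bounds of Lemma \ref{lem:3.5} (and their analogs for the adjoint processes, which follow by linearity of the adjoint equations in the dual variables) together with H\"older's inequality, exploiting the $L^\infty$-bound $|u - v| \leq 2\sup_{w \in U}|w|$ from compactness of $U$ to reduce all higher-order moments of $u - v$ back to the $L^2$ quantity $d(u,v)^2$, so that the resulting multiplicative constants can be absorbed into $C$.
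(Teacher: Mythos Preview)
Your plan is correct and is in fact an explicit expansion of the paper's own proof, which consists of a single sentence citing Proposition~2.1 in \cite{Mou} and Lemma~2 in \cite{Xu95}; those references supply precisely the $L^\beta$ forward--SDE and BSDE stability estimates that you invoke block by block. One small slip worth noting: since the lemma's norms are taken under $\mathbb{P}$, you must rewrite the adjoint equations \eqref{eq:18} using $dW^u = dY - h(t,x^u,u)\,dt$, which adds a drift term $-R_2^u h(t,x^u,u)$ to the $r$-BSDE (and analogous terms to the $k$- and $p$-equations), so the $r$-driver is not literally free of adjoint variables; however, $h$ is uniformly bounded, so this is a standard Lipschitz BSDE in $R_2$ and your $L^2$-stability argument goes through after handling the extra cross term $R_2^u\bigl(h(t,x^u,u)-h(t,x^v,v)\bigr)$ via Lemma~\ref{lem:3.5} and H\"older, exactly as you already do for the $k$- and $p$-blocks.
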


\begin{proof}
  The proof can be directly obtained
  by  combining Proposition 2.1 in
  \cite{Mou} and Lemma 2
  in \cite{Xu95}.
\end{proof}

Now we state our main result, which provides the necessary condition for a control to
be near-optimal with order $\varepsilon^\frac{1}{2}$.

\begin{theorem}

Let Assumptions \ref{ass:1.1}
   and \ref{ass:1.2} be satisfied.
    Let $ ( u^{\varepsilon } ( \cdot  );
\Theta^{\varepsilon } ( \cdot  ),\rho^\eps(\cdot)  )= (u^{\varepsilon } ( \cdot
 ); x^{\varepsilon } ( \cdot  ), y^{\varepsilon
} ( \cdot  ), z^{\varepsilon } _1( \cdot  ), z^{\varepsilon } _2( \cdot  ),\rho^\eps(\cdot)  ) $ be $%
\varepsilon $-optimal pair of problem
\ref{pro:1.2}.
Then for any given  $\varepsilon >0$,  there
is a
positive constant $C$  s.t.
\begin{equation}  \label{eq:4.99}
\mathbb E^\eps\bigg[\int_0^T {\cal H}_{u}\left( t,\Theta^{\varepsilon }\left( t \right)
,u^{\varepsilon }\left( t \right),
\Lambda ^{\epsilon}(t),R^{\eps}_2 \right)
\cdot \left( u(t) -u^{\varepsilon }\left( t\right) \right)\bigg] \geq
-C\varepsilon ^{\frac{1}{2}}, for~any~u(\cdot)\in \cal A,
\end{equation}
where $(\Lambda ^{\epsilon}(\cdot),\Gamma ^{\epsilon}(\cdot))=(k
^{\epsilon}(\cdot),
p^\varepsilon(\cdot),
q^\varepsilon_1(\cdot),q^\varepsilon_2(\cdot),
r
^{\epsilon}(\cdot),
R^\varepsilon_1(\cdot),R^\varepsilon_2(\cdot)
 )$ is
the adjoint process  corresponding to $\left(
u^{\varepsilon }\left( \cdot \right);\Theta^{\varepsilon }\left(
\cdot \right), \rho^\eps(\cdot)\right)$
and $\mathbb E^\eps$
denotes the expectation with respect to the
probability space $(\Omega, \mathscr F, \{\mathscr{F}_t\}_{0\leq
t\leq T},\mathbb P^{u^\eps}).$
\end{theorem}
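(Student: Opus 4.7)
The plan is to apply Ekeland's variational principle to convert the $\varepsilon$-optimality of $u^{\varepsilon}$ into an exact optimality for a slightly perturbed cost, derive a standard pointwise variational inequality for that exact minimizer via convex perturbations, and then transfer the inequality back to $u^{\varepsilon}$ using the stability estimates of Lemmas~\ref{lem:3.5} and \ref{lem:3.2}.

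First I would set $\lambda=\varepsilon^{1/2}$ and invoke Lemma~\ref{lem:3.1} on the metric space $(\mathcal{A},d)$ with the functional $\rho(u)=J(u)$ (which is bounded below and $d$-continuous by Lemma~\ref{lem:3.2}). This produces $u^{\lambda}\in\mathcal{A}$ with $J(u^{\lambda})\le J(u^{\varepsilon})$, $d(u^{\lambda},u^{\varepsilon})\le \varepsilon^{1/2}$, and, crucially, $u^{\lambda}$ is the exact minimizer over $\mathcal{A}$ of the perturbed functional $\widetilde J(u):=J(u)+\varepsilon^{1/2}d(u,u^{\lambda})$. Next, for an arbitrary $u(\cdot)\in\mathcal{A}$ I use the convexity of $U$ to form the convex variation $u^{\lambda,\rho}(t)=u^{\lambda}(t)+\rho(u(t)-u^{\lambda}(t))$, $\rho\in[0,1]$. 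From $\widetilde J(u^{\lambda,\rho})\ge \widetilde J(u^{\lambda})$, dividing by $\rho$ and letting $\rho\downarrow 0$ gives
\begin{equation*}
\frac{d}{d\rho}J(u^{\lambda,\rho})\Big|_{\rho=0^+}\ \ge\ -\varepsilon^{1/2}\Big(\mathbb{E}\int_0^T|u(t)-u^{\lambda}(t)|^2\,dt\Big)^{1/2}.
\end{equation*}

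The second step is to identify the left-hand side with the expected integral of $\mathcal{H}_u$ against the variation. This is the standard variational computation for the forward-backward system \eqref{eq:3.7} combined with cost \eqref{eq:15}, treating $\rho^{u}$ as an additional state. One linearizes $(x^{\lambda,\rho},y^{\lambda,\rho},z_1^{\lambda,\rho},z_2^{\lambda,\rho},\rho^{\lambda,\rho})$ around $\rho=0$ to get the variational FBSDE, then applies It\^o's formula to the adjoint processes $(k^{\lambda},p^{\lambda},q_1^{\lambda},q_2^{\lambda},r^{\lambda},R_1^{\lambda},R_2^{\lambda})$ defined by \eqref{eq:18}; the cross terms assemble exactly into the shifted Hamiltonian derivative in \eqref{eq:19} (the shift $R_2-\sigma_2^{\ast}p-z_2^{\ast}k$ absorbs the contribution coming from the $\sigma_2\,dY$ and $z_2\,dY$ terms after changing to $\mathbb{P}^{u^{\lambda}}$). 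Combined with the compactness of $U$, which bounds $\|u-u^{\lambda}\|_{L^2}$ uniformly by a constant $C_0$, one obtains
\begin{equation*}
\mathbb{E}^{u^{\lambda}}\int_0^T \mathcal{H}_u\bigl(t,\Theta^{\lambda}(t),u^{\lambda}(t),\Lambda^{\lambda}(t),R_2^{\lambda}(t)\bigr)\cdot(u(t)-u^{\lambda}(t))\,dt\ \ge\ -C_0\,\varepsilon^{1/2}.
\end{equation*}

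The final step is the transfer from $u^{\lambda}$ back to $u^{\varepsilon}$. Write the difference as $\mathbb{E}\int_0^T[\rho^{\lambda}\mathcal{H}_u^{\lambda}-\rho^{\varepsilon}\mathcal{H}_u^{\varepsilon}]\cdot(u-u^{\lambda})\,dt+\mathbb{E}^{u^{\varepsilon}}\int_0^T\mathcal{H}_u^{\varepsilon}\cdot(u^{\varepsilon}-u^{\lambda})\,dt$. For the first piece, the boundedness of $\mathcal{H}_{ux},\mathcal{H}_{uy},\ldots,\mathcal{H}_{uu}$ from Assumptions \ref{ass:1.1}--\ref{ass:1.2} together with Lemma~\ref{lem:3.2} gives an $L^2$-bound of order $d(u^{\lambda},u^{\varepsilon})\le \varepsilon^{1/2}$ on $\mathcal{H}_u^{\lambda}-\mathcal{H}_u^{\varepsilon}$ and on $\rho^{\lambda}-\rho^{\varepsilon}$; Lemma~\ref{lem:3.5} provides the $L^m$ bounds needed to apply Cauchy-Schwarz (along with H\"older for the density factors). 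For the second piece, the compactness of $U$ gives $|u^{\varepsilon}-u^{\lambda}|\le C$ pointwise and $d(u^{\varepsilon},u^{\lambda})\le \varepsilon^{1/2}$ controls the $L^2$-norm. Summing up yields the announced estimate \eqref{eq:4.99}.

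The main technical obstacle I anticipate is the careful justification of the Gateaux differentiation in the presence of the Girsanov density $\rho^{u}$, because $\rho^{u}$ depends nonlinearly on $u$ through $h(\cdot,x^u,u)$, and the perturbed state $x^{\lambda,\rho}$ enters both the martingale density and the BSDE coefficients; this is what forces the shifted form of $\mathcal{H}_u$ in \eqref{eq:19} and what makes the uniform-in-$\rho$ $L^m$-estimates of Lemma~\ref{lem:3.5} essential for exchanging limit and expectation.
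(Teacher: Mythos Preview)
Your proposal is correct and follows essentially the same route as the paper: Ekeland's principle with $\lambda=\varepsilon^{1/2}$ to produce an auxiliary minimizer, a convex perturbation to obtain the variational inequality for that minimizer, and then a transfer back to $u^{\varepsilon}$ via the stability estimates of Lemmas~\ref{lem:3.5} and~\ref{lem:3.2}. The only cosmetic differences are that the paper invokes the variational formula (35) of \cite{MST} as a black box rather than sketching the It\^o/linearization argument you outline, and that its transfer step is split into the two pieces $I_1^{\varepsilon}$ (change of state/adjoint and of comparison point under the fixed measure $\mathbb{P}^{\bar u^{\varepsilon}}$) and $I_2^{\varepsilon}$ (change of measure from $\mathbb{P}^{\bar u^{\varepsilon}}$ to $\mathbb{P}^{u^{\varepsilon}}$), which is a slightly different but equivalent decomposition to yours.
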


\begin{proof}
By
Lemma \ref{lem:3.2} and Assumptions \ref{ass:1.1} and \ref{ass:1.2}, we can deduce
that $J(u(\cdot))$ is continuous on $\cal A$ with respect to the metric
 \eqref{eq:3.4}.
Using Ekeland's variational principle ( see Lemma \ref{lem:3.1}) with $\delta =\varepsilon ^{\frac{1}{%
2}},$ there exists an admissible pair $(\bar u^{\varepsilon }\left(
\cdot \right);\bar\Theta^{\varepsilon }\left( \cdot \right), \bar \rho^{\varepsilon }(\cdot) )=\left(
 \bar{u}^{\varepsilon }\left( \cdot \right);\bar{x}^{\varepsilon
}\left(\cdot \right) ,\bar y^{\varepsilon }\left( \cdot \right),
\bar z^{\varepsilon }_1\left( \cdot \right),\bar z^{\varepsilon }_2\left( \cdot \right), \bar \rho^{\varepsilon }(\cdot)\right) $ such that

\begin{equation}\label{eq:4.10}
d\left( u^{\varepsilon }\left( \cdot \right) ,\bar{u}^{\varepsilon }\left(
\cdot \right) \right) \leq \varepsilon ^{\frac{1}{2}}
\end{equation}
and
\begin{equation}\label{eq:4.11}
J\left( u\left( \cdot \right) \right) -J\left( \bar{u}^{\varepsilon }\left(
\cdot \right) \right) \geq -\varepsilon ^{\frac{1}{2}}d\left( u\left( \cdot
\right) ,\bar{u}^{\varepsilon }\left( \cdot \right) \right) ,\forall u\left(
\cdot \right) \in \mathcal{A}.\text{ }
\end{equation}
Now we define a convex perturbed control $u^{\varepsilon ,\delta}\left( \cdot
\right)$ of $\bar{u}^{\varepsilon }\left( \cdot \right) $ as%
\begin{equation*}
u^{\varepsilon ,\delta}\left( \cdot \right) =\bar{u}^{\varepsilon }(\cdot )+\delta(%
\bar{u}^{\varepsilon }(\cdot )-u\left( \cdot \right) ),
\end{equation*}
where $u\left( \cdot \right) \in \mathcal{A}$ is an arbitrary given admissible control and $0\leq \delta\leq 1 $.\\
Then by the variational formula
(35) in Meng, Shi and Tang
\cite{MST}, \eqref{eq:4.11}  and the fact
\begin{equation*}
d\left( u^{\varepsilon ,\delta}\left( \cdot \right) ,\bar{u}^{\varepsilon }\left(
\cdot \right) \right) \leq C\delta,
\end{equation*}
we have
\begin{equation}\label{eq:23}
\begin{array}{ll}
&\mathbb E^{\bar {u}^{\varepsilon }}\bigg[\displaystyle\int_{0}^{T}{\cal H}_{u}( t, \bar {\Theta}^{\varepsilon }(t),\bar {u}^{\varepsilon }(t) ,\bar\Lambda ^{\epsilon}(t),\bar R ^{\epsilon}_2(t) ) \cdot ( u(t) -\bar{u}^{\varepsilon }( t)) dt \bigg]\\
&=\lim_{\varepsilon \longrightarrow 0^{+}}\frac{J(u^{\varepsilon ,
\delta}\left(
\cdot\right) )-J(\bar{u}^{\varepsilon }(\cdot))}{\delta} \geq \lim_{\varepsilon \longrightarrow 0^{+}}\frac{-\varepsilon ^{\frac{1}{2}}d\left( u^{\varepsilon ,\delta}\left( \cdot\right) ,
\bar{u}^{\varepsilon}\left( \cdot\right) \right) }{\delta} \\
&\geq -C\varepsilon ^{\frac{1}{2}},
\end{array}
\end{equation}
where $(\bar {\Lambda}^{\epsilon},\bar {\Gamma}^{\epsilon})=(\bar k^{\epsilon}(\cdot),
\bar p^\varepsilon(\cdot),\bar q^\varepsilon_1(\cdot),\bar q^\varepsilon_2(\cdot),\bar r^\varepsilon(\cdot),\bar R^\varepsilon_1(\cdot),\bar R^\varepsilon_2(\cdot) )$ is the
adjoint process corresponding to $(\bar u^\varepsilon(\cdot); \bar \Theta^\varepsilon(\cdot),\bar \rho^\varepsilon(\cdot)).$\\
Now in order to obtain the optimal  condition \eqref{eq:4.99}, we now have to  estimate  the
following formula:
\begin{eqnarray}\label{eq:24}
\begin{split}
&I^{\varepsilon }_1 :=\mathbb E^{\bar u^\eps}\bigg[\int_{0}^{T}{\cal H}_{u}\left( t,\Theta^{{\varepsilon }}(t) ,u^\epsilon(t),\Lambda ^{\epsilon}(t),R_2 ^{\epsilon}(t)\right) \cdot \left( u\left(
t\right) -u^{\varepsilon }\left( t\right) \right) dt\bigg] \\
&~~~~~~~~~~-\mathbb E^{\bar u^\eps}\bigg[\int_{0}^{T}H_{u}\left( t,\bar \Theta^\epsilon(t) ,\bar u^\epsilon(t),\bar\Lambda ^{\epsilon}(t),\bar R_2 ^{\epsilon}(t)\right) \cdot \left( u\left(
t\right) -\bar{u}^{\varepsilon }\left( t\right) \right) dt\bigg]
\end{split}
\end{eqnarray}
and
\begin{eqnarray}\label{eq:25}
\begin{split}
&I^{\varepsilon }_2 :=\mathbb E^\eps\bigg[\int_{0}^{T}{\cal H}_{u}\left( t,\Theta^{{\varepsilon }}(t) ,u^\epsilon(t),\Lambda ^{\epsilon}(t),
R_2 ^{\epsilon}(t)\right) \cdot \left( u\left(
t\right) -u^{\varepsilon }\left( t\right) \right) dt \bigg]\\
&~~~~~~~~~~-\mathbb E^{\bar u^\eps}\bigg[\int_{0}^{T}{\cal H}_{u}\left( t,\Theta^{{\varepsilon }}(t) ,u^\epsilon(t),\Lambda ^{\epsilon}(t),R_2 ^{\epsilon}(t)\right) \cdot \left( u\left(
t\right) -u^{\varepsilon }\left( t\right) \right) dt \bigg].
\end{split}
\end{eqnarray}

From Lemmas \ref{lem:3.5}
and \ref{lem:3.2},
it is easy to get that
\begin{eqnarray} \label{eq:26}
  I_1^\eps\geq -C\eps^{\frac{1}{2}}
\end{eqnarray}
and
\begin{eqnarray}\label{eq:27}
  I_2^\eps\geq -C\eps^{\frac{1}{2}}.
\end{eqnarray}

Therefore  combining
\eqref{eq:23}, \eqref{eq:24}, \eqref{eq:25},\eqref{eq:26}
and \eqref{eq:27},
we have
\begin{eqnarray}
  \begin{split}
&\mathbb E^\eps\bigg[\int_{0}^{T}{\cal H}_{u}\left( t,\Theta^{\varepsilon }\left( t \right) ,u^{\varepsilon }\left( t \right), \Lambda ^{\epsilon}(t),R_2 ^{\epsilon}(t) \right) \cdot \left(
u(r) -u^{\varepsilon }\left( t\right) \right) dt\bigg]
\\&=\mathbb E^{\bar u^\eps}\bigg[\displaystyle\int_{0}^{T}
{\cal H}_{u}( t, \bar {\Theta}^{\varepsilon }(t),\bar {u}^{\varepsilon }(t) ,\bar\Lambda ^{\epsilon}(t),  \bar R_2 ^{\epsilon}(t) ) \cdot ( u(t) -\bar{u}^{\varepsilon }( t)) dt\bigg]+I_1^\varepsilon+I_2^\varepsilon
\\&\geq -C\varepsilon ^{\frac{1}{2}},
\end{split}
\end{eqnarray}
which implies \eqref{eq:4.99} holds.
The proof is complete.

\end{proof}

 Next, we prove that under an additional assumptions, the near-maximum condition on the Hamiltonian function is sufficient for near optimality of Problem \ref{pro:1.2} in the case when
the observation process is not
affected by the control process.
Suppose that
$$h(t,x,u)=h(t)$$ is an
$\mathscr F^Y_t-$ adapted bounded
process.
  Define a new probability measure $\mathbb Q$ on $(\Omega, \mathscr F)$ by
\begin{eqnarray}
  d\mathbb Q=\rho(T)d\mathbb P,
\end{eqnarray}
where
\begin{eqnarray} \label{eq:43}
  \left\{
\begin{aligned}
  d \rho(t)=&  \rho(t) h(s)dY(s)\\
  \rho(0)=&1.
\end{aligned}
\right.
\end{eqnarray}

\begin{theorem}{\bf [Sufficient Maximum Principle] } \label{thm:4.1}
 Let Assumptions \ref{ass:1.1}
  and \ref{ass:1.2} be satisfied.   Let $(u^\eps (\cdot);
  \Theta^\eps (\cdot))=(u^\eps (\cdot);
  x^\eps (\cdot),
  y^\eps(\cdot), z^\eps_1(\cdot),
  z_2^\eps (\cdot))$ be an admissible pair with $\phi(x)=\phi x,$
  where $\phi$ is $\mathscr F_T-$measurable bounded  random variable.
If the following conditions are satisfied,
\begin{enumerate}
\item[(i)]  $\Phi$ and $\gamma$ is convex in $x$ and $y,$ respectively,
\item[(ii)] the Hamiltonian ${\cal H}$ is convex in $(x, y, z_1, z_2,  u)$,
\item[(iii)]for some $\eps>0$ and
any $u(\cdot)\in \cal A,$
\begin{eqnarray}\label{eq:5.119}
&& \mathbb E^Q\bigg[\int_0^T{\cal H}_u ( t,\Theta^\eps (t),u^\eps(t), \Lambda^\eps(t), R_2^\eps(t))
\cdot(u(t)-u^\eps(t))dt\bigg]
\geq -C\eps^\lambda
\end{eqnarray}
\end{enumerate}where $( {\Lambda}^{\epsilon}(\cdot), {\Gamma}^{\epsilon}(\cdot))=( k^{\epsilon}(\cdot),
 p^\varepsilon(\cdot), q^\varepsilon_1(\cdot), q^\varepsilon_2(\cdot), r^\varepsilon(\cdot), R^\varepsilon_1(\cdot), R^\varepsilon_2(\cdot) )$ is the
adjoint process corresponding to $( u^\varepsilon(\cdot);  \Theta^\varepsilon(\cdot)).$
Then
\begin{equation}\label{eq:32}
J\left( u^{\varepsilon }\left( \cdot \right) \right) \leq \inf_{u\left(
\cdot \right) \in \mathcal{A}}J\left( u\left( \cdot \right) \right)
+C\varepsilon ^{\lambda},
\end{equation}
where $C$ is a constant independent of $\varepsilon .$
\end{theorem}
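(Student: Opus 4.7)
The strategy is a direct comparison-plus-duality argument: for an arbitrary admissible $u(\cdot)\in\mathcal{A}$ I will show $J(u(\cdot))-J(u^{\varepsilon}(\cdot))\geq -C\varepsilon^{\lambda}$, which after taking infimum yields \eqref{eq:32}. Since $h(t,x,u)=h(t)$ is independent of the control, the density $\rho^{u}$ in \eqref{eq:8} coincides with the $u$-free process $\rho$ of \eqref{eq:43}; consequently $\mathbb{P}^{u}=\mathbb{Q}$ uniformly in $u$ and
$$J(u(\cdot))=\mathbb{E}^{\mathbb{Q}}\Bigl[\int_{0}^{T}l(t,\Theta^{u}(t),u(t))\,dt+\Phi(x^{u}(T))+\gamma(y^{u}(0))\Bigr].$$
Under $\mathbb{Q}$ the pair $(W,W^{\mathbb{Q}})$, with $W^{\mathbb{Q}}:=Y-\int_{0}^{\cdot}h(s)\,ds$, is a two-dimensional standard Brownian motion, and both the state system \eqref{eq:1} and the adjoint system \eqref{eq:18} take the canonical FBSDE form driven by $(W,W^{\mathbb{Q}})$.

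The computational heart is Itô's formula applied under $\mathbb{Q}$ to $\langle p^{\varepsilon}(t),\Delta x(t)\rangle+\langle k^{\varepsilon}(t),\Delta y(t)\rangle$, where $\Delta x=x^{u}-x^{\varepsilon}$, $\Delta y=y^{u}-y^{\varepsilon}$, $\Delta z_{i}=z_{i}^{u}-z_{i}^{\varepsilon}$. Taking $\mathbb{E}^{\mathbb{Q}}$ annihilates the stochastic integrals. The terminal evaluation exploits $p^{\varepsilon}(T)=\Phi_{x}(x^{\varepsilon}(T))-\phi^{*}k^{\varepsilon}(T)$ and the linearity $\phi(x)=\phi x$, which gives $\Delta y(T)=\phi\,\Delta x(T)$ so the $-\phi^{*}k^{\varepsilon}(T)$ piece cancels against $\langle k^{\varepsilon}(T),\phi\,\Delta x(T)\rangle$; the initial evaluation uses $k^{\varepsilon}(0)=-\gamma_{y}(y^{\varepsilon}(0))$. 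Because $h$ depends only on $t$, the shift in \eqref{eq:19} is inert to every partial derivative of $\mathcal{H}$ and the $\langle R_{2},h\rangle$ contribution is constant in $(x,y,z_{1},z_{2},u)$. The result is the duality identity
$$\mathbb{E}^{\mathbb{Q}}\bigl[\langle\Phi_{x}(x^{\varepsilon}(T)),\Delta x(T)\rangle+\langle\gamma_{y}(y^{\varepsilon}(0)),\Delta y(0)\rangle\bigr]=\mathbb{E}^{\mathbb{Q}}\!\int_{0}^{T}\!\bigl[\langle p^{\varepsilon},\Delta b\rangle+\langle q_{1}^{\varepsilon},\Delta\sigma_{1}\rangle+\langle q_{2}^{\varepsilon},\Delta\sigma_{2}\rangle+\langle k^{\varepsilon},\Delta f\rangle-\langle\mathcal{H}_{x}^{\varepsilon},\Delta x\rangle-\langle\mathcal{H}_{y}^{\varepsilon},\Delta y\rangle-\langle\mathcal{H}_{z_{1}}^{\varepsilon},\Delta z_{1}\rangle-\langle\mathcal{H}_{z_{2}}^{\varepsilon},\Delta z_{2}\rangle\bigr]dt,$$
with $\Delta b=b(t,x^{u},u)-b(t,x^{\varepsilon},u^{\varepsilon})$ and analogous definitions for $\Delta\sigma_{i},\Delta f$.

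Hypothesis (i) (convexity of $\Phi$ in $x$ and of $\gamma$ in $y$) gives
$$J(u)-J(u^{\varepsilon})\geq\mathbb{E}^{\mathbb{Q}}\!\int_{0}^{T}\![l(t,\Theta^{u},u)-l(t,\Theta^{\varepsilon},u^{\varepsilon})]\,dt+\mathbb{E}^{\mathbb{Q}}[\langle\Phi_{x},\Delta x(T)\rangle]+\mathbb{E}^{\mathbb{Q}}[\langle\gamma_{y},\Delta y(0)\rangle].$$
Substituting the duality identity and collecting the $l,b,\sigma_{1},\sigma_{2},f$ pieces into the Hamiltonian \eqref{eq6}, the right-hand side rewrites as
$$\mathbb{E}^{\mathbb{Q}}\!\int_{0}^{T}\!\bigl[\mathcal{H}(t,\Theta^{u},u,\Lambda^{\varepsilon},R_{2}^{\varepsilon})-\mathcal{H}(t,\Theta^{\varepsilon},u^{\varepsilon},\Lambda^{\varepsilon},R_{2}^{\varepsilon})-\langle\mathcal{H}_{x}^{\varepsilon},\Delta x\rangle-\langle\mathcal{H}_{y}^{\varepsilon},\Delta y\rangle-\langle\mathcal{H}_{z_{1}}^{\varepsilon},\Delta z_{1}\rangle-\langle\mathcal{H}_{z_{2}}^{\varepsilon},\Delta z_{2}\rangle\bigr]dt.$$
Hypothesis (ii), convexity of $\mathcal{H}$ in $(x,y,z_{1},z_{2},u)$, bounds this integrand from below by $\langle\mathcal{H}_{u}^{\varepsilon},u-u^{\varepsilon}\rangle$, and hypothesis (iii) \eqref{eq:5.119} then closes the argument, giving $J(u)-J(u^{\varepsilon})\geq -C\varepsilon^{\lambda}$.

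The main obstacle I anticipate is careful bookkeeping of the Itô/duality step: one must verify that the shift $R_{2}\mapsto R_{2}-\sigma_{2}^{*}p-(z_{2})^{*}k$ from \eqref{eq:19} is indeed inert when $h=h(t)$ and that the $\langle R_{2},h\rangle$ summand of $\mathcal{H}$ contributes nothing to the convexity inequality because it is (in fact) constant in the five convexity variables. The linearity of $\phi$ is essential for cancelling the $-\phi^{*}k^{\varepsilon}(T)$ cross term; without it a second-order remainder involving $\phi''$ would survive and would need to be absorbed, presumably by imposing a separate convexity hypothesis on $\phi$.
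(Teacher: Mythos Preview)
Your proposal is correct and follows essentially the same approach as the paper: represent $J(u)-J(u^{\varepsilon})$ via a variational/duality formula, apply the convexity of $\Phi$, $\gamma$, and $\mathcal{H}$ to reduce to $\mathbb{E}^{\mathbb{Q}}\int_{0}^{T}\langle\mathcal{H}_{u}^{\varepsilon},u-u^{\varepsilon}\rangle\,dt$, and then invoke hypothesis (iii). The only difference is that the paper quotes the variational formula \eqref{eq:40} from an external reference, whereas you derive it explicitly by applying It\^o's formula to $\langle p^{\varepsilon},\Delta x\rangle+\langle k^{\varepsilon},\Delta y\rangle$ under $\mathbb{Q}$; your version is thus more self-contained, and your remarks on why $h=h(t)$ and the linearity of $\phi$ are needed make explicit what the paper leaves implicit.
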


\begin{proof}
Let $(u(\cdot); \Theta(\cdot))=({u}(\cdot);x^u(\cdot),
 y^u(\cdot),
 z^u_1(\cdot),  z^u_2(\cdot))$  be an arbitrary admissible pair. By the variational formula
(35) in \cite{MeSh},
we can represent the difference $J ( u (\cdot) ) - J ( u^\eps(\cdot) )$ as follows
\begin{eqnarray}\label{eq:40}
&&J (u (\cdot)) - J ( u^\eps(\cdot))\nonumber
\\ &=& {\mathbb E} ^Q
\bigg[\int_0^T \bigg [ { \cal H}(t,\Theta (t),u(t), \Lambda^\eps(t),R_2^\eps(t)) - {\cal H} (t,\Theta^\eps (t),u^\eps(t), \Lambda^\eps(t),R_2^\eps(t)) \nonumber \\&& - \big < {\cal H}_x (t,\Theta^\eps (t),u^\eps(t), \Lambda^\eps(t),R_2^\eps(t)),x^u (t) -
x^\eps (t)
\big>\nonumber
\\&&- \big <{ \cal H}_y (t,\Theta^\eps (t),u^\eps(t), \Lambda^\eps(t),\R_2^\eps(t)),
y^u (t) - y^\eps (t)\big>\nonumber
\\&&- \big <{\cal H}_{z_1} (t, \Theta^\eps (t),u^\eps(t), \Lambda^\eps(t),R_2^\eps(t)),
 z^u_1 (t) - z^\eps_1 (t)\big>
 \nonumber
\\&&- \big <{\cal H}_{z_2} (t, \Theta^\eps (t),u^\eps(t), \Lambda^\eps(t)),
 z^u_2(t) - z^\eps_2 (t)\big>
 dt\bigg]
\nonumber \\
&& + {\mathbb E}^{Q} \big [ \Phi(x^u(T))
 - \Phi(x^\eps(T)) - \left <
x^u (T) - x^\eps (T), \Phi_x (
x^\eps(T))  \right
> \big ]
\nonumber \\
&& + {\mathbb E}
 \big [ \gamma(y^u(0))
 - \gamma (y^{\eps}(0)) - \left <
y^u (0)
- y^\eps (0), \gamma_y (y^\eps(0))  \right
> \big ].
\end{eqnarray}

By the convexity of ${\cal H}$, $\Phi$
and $\gamma$ (i.e. Conditions (i) and (ii)), we have
\begin{eqnarray}\label{eq:41}
 &&{ \cal H}(t, \Theta^u(t),
u(t), \Lambda^\eps_1(t), R^\eps_2(t)) - {\cal H} (t, \Theta^\eps(t),
u^\eps(t),
\Lambda^\eps(t),
R_2^\eps(t))\nonumber
\\ &\geq&  \big < {\cal H}_x (t,\Theta^\eps (t),u^\eps(t), \Lambda^\eps(t), R^\eps_2(t)),x^u (t) -
x^{\eps} (t)
\big>\nonumber
+ \big <{ \cal H}_y (t,\Theta^\eps (t),u^\eps(t), \Lambda^\eps(t),R^\eps_2(t)),
y^u (t) - y^{\eps} (t)\big>\nonumber
\\&&+ \big <{\cal H}_{z_1} (t,\Theta^\eps (t),u^\eps(t), \Lambda^\eps(t),R^\eps_2(t)),
 z^u_1 (t) -
z_1^{\eps} (t)\big>\nonumber
+ \big <{\cal H}_{z_2} (t,\Theta^\eps (t),u^\eps(t), \Lambda^\eps(t),R^\eps_2(t)),
 z^u_2 (t) -
z_2^{\eps} (t)\big>
\\&&+ \big <{\cal H}_{u} (t,\Theta^\eps (t),u^\eps(t), \Lambda^\eps(t),R^\eps_2(t)),u (t) -
u^\eps(t)\big>,
\end{eqnarray}

\begin{eqnarray}\label{eq:42}
 \Phi(x^u(T))
 - \Phi(x^\eps(T)) \geq \left <
x^u (T) - x^\eps (T), \Phi_x (
x^u(T))  \right
>
\end{eqnarray}
and
\begin{eqnarray}\label{eq:43}
 \gamma(y^u(0))
 - \gamma (y^{\eps}(0)) \geq \left <
y^u (0)
- y^\eps (0), \gamma_y (y^\eps(0))  \right
>
\end{eqnarray}
Putting \eqref{eq:41},\eqref{eq:42} and  \eqref{eq:5.119} into \eqref{eq:40},
we have
\begin{eqnarray}
J (u (\cdot)) - J (u^\eps (\cdot)) \geq
-C\eps^\lambda .
\end{eqnarray}
Due to the arbitrariness of $u (\cdot)$, we can conclude that
\eqref{eq:32} holds. The proof is completed.
\end{proof}
\section{Conclusion}
 we studied near-optimal controls under forward-backward  stochastic systems with partial information.
By Ekeland's variational principle,
convex variation techniques and some delicate estimates, we establish a Pontryagin type near-maximum principle as a necessary and
a sufficient condition for near-optimality.
The error bound in the necessary condition is of order ``exactly" $\varepsilon^\frac{1}{2}$.

\bibliographystyle{model1a-num-names}

\end{document}